\newtheorem{theorem}{Theorem}
\theoremstyle{plain}
\newtheorem{corollary}{Corollary}
\newtheorem{proposition}{Proposition}
\newtheorem{remark}{Remark}
\numberwithin{equation}{section}
\begin{document}
\title{ON THE\ EXISTENCE OF PROPER NEARLY KENMOTSU MANIFOLDS}
\author{I. K\"{u}peli Erken}
\address{ Uludag University, Faculty of Art and Science, Department of
Mathematics, Gorukle 16059, Bursa-TURKEY}
\email{iremkupeli@uludag.edu.tr, piotrdacko@yahoo.com, cengiz@uludag.edu.tr}
\author{Piotr Dacko}
\author{C. Murathan }
\subjclass[2010]{53C25, 53C55, 53D15.}
\keywords{Almost Contact Metric Manifold, Kenmotsu Manifold, Nearly Kenmotsu
Manifold}

\begin{abstract}
This is an expository paper, which provides a first approach to nearly
Kenmotsu manifolds. The purpose of this paper is to focus on nearly Kenmotsu
manifolds and get some new results from it. We prove that for a nearly
Kenmotsu manifold is locally isometric to warped product of real line and
nearly K\"{a}hler manifold. Finally, we prove that there exist no nearly
Kenmotsu hypersurface $M^{2n+1}$ of nearly K\"{a}hler manifold $N^{2n+2}$.
It is shown that a normal nearly Kenmotsu manifold is Kenmotsu manifold.
\end{abstract}

\maketitle


\section{I\textbf{ntroduction}}

Nearly Kaehler manifolds were defined by Gray \cite{gray1}. He carry on
study of Nearly Kaehler manifolds \cite{gray1.5}, \cite{gray2}. Nearly
Sasakian manifolds were introduced by Blair, Showers and Yano in \cite%
{Blairnearly}. Afterwards, Olszak studied nearly Sasakian manifolds in \cite%
{olszaktensor}. He gave properties of $5$-dimensional nearly Sasakian
non-Sasakian manifolds. Parallel to the study of \cite{olszaktensor}, Endo
studied nearly cosymplectic manifolds \cite{endo}. Recently, Cappelletti
Montano and Dileo study Nearly Sasakian Geometry \cite{mino}. While much of
the similarity between nearly Sasakian manifolds and nearly cosymplectic
manifolds are emphasized, and properties of these manifolds are
investigated, nearly Kenmotsu manifolds are ignored. The notion of nearly
Kenmotsu manifold was introduced in \cite{shukla}. In the present paper, we
want to fill this gap in the study of nearly Kenmotsu manifolds. In
literature we did not fall in with proper nearly Kenmotsu manifold examples.
\ So one can ask the following question. Do there exist proper nearly
Kenmotsu manifolds? In this paper we give a positive answer to the question
for dimension $>5$. In this study we gave certain properties of such
manifolds. Our work is structured as follows: In Section $2$, we report some
basic information about nearly Kenmotsu manifolds. In the next section, we
give some curvature identies about nearly Kenmotsu manifolds and we prove
that for a nearly Kenmotsu manifold, $H=0$ and the distribution $D$ is
completely integrable. In the last section, we show that a normal nearly
Kenmotsu manifold is Kenmotsu manifold and there exist no nearly Kenmotsu
hypersurface $M^{2n+1}$ of nearly Kaehler manifold $N^{2n+2}$.

\section{Preliminaries}

\label{n.k.m.} In this paper all objects are to be considered as $C^\infty$%
-class, manifolds are assumed to be connected. We accept the following
convention that $X,Y, Z,W \ldots $, will denote vector fields, if it is not
otherwise stated.

Let $M$ be a $(2n+1)$-dimensional differentiable manifold and $\phi $ is a $%
(1,1)$ tensor field, $\xi $ is a vector field, $\eta $ is a one-form, $g$
Riemannina metric on $M$. Then $(\phi ,\xi ,\eta ,g)$ is called an almost
contact metric structure on $M$, if%
\begin{equation*}
\phi ^{2}=-Id+\eta \otimes \xi ,\quad \eta (\xi )=1,\quad g(\phi X,\phi
Y)=g(X,Y)-\eta (X)\eta (Y).
\end{equation*}%
and $M$ is said to be an almost contact metric manifold if it is endowed
with an almost contact metric structure \cite{Blair}, \cite{yano}. For such
manifold 
\begin{gather}
\eta (X)=g(X,\xi ),\quad \phi (\xi )=0,\quad \eta \circ \phi =0, \\
g(X,\phi Y)+g(Y,\phi X)=0,  \label{2}
\end{gather}%
tensor field $\Phi (X,Y)=g(X,\phi Y)$, is customary called fundamental form.
In this paper we will refer to $\xi $, as Reeb vector field and $\eta $, as
Reeb form. By $[\phi ,\phi ]$ we denote Nijenhuis torsion tensor of $\phi $,
by definition 
\begin{equation}
\lbrack \phi ,\phi ](X,Y)=\phi ^{2}[X,Y]+[\phi X,\phi Y]-\phi \lbrack X,\phi
,Y]-\phi \lbrack X,\phi Y],
\end{equation}%
where $[X,Y]$ denotes the Lie bracket of vector fields.

An almost contact metric manifold $(M,\phi ,\xi ,\eta )$ is called nearly
Kenmotsu manifold \cite{shukla}, if%
\begin{equation}
(\nabla _{X}\phi )Y+(\nabla _{Y}\phi )X=-\eta (Y)\phi X-\eta (X)\phi Y,
\label{4}
\end{equation}%
where $\nabla $ is the Levi-Civita connection of $g$. Moreover, if $M$
satisfies%
\begin{equation}
(\nabla _{X}\phi )Y=g(\phi X,Y)\xi -\eta (Y)\phi X,  \label{5}
\end{equation}%
then it is called Kenmotsu manifold \cite{kenmotsu}. Every Kenmotsu manifold
is a nearly Kenmotsu manifold but the converse is not true, which in fact
will be proved in this paper. If $M$ is nearly Kenmotsu but non Kenmotsu we
will call manifold is proper nearly Kenmotsu manifold.

Let $M$ be nearly Kenmotsu manifold. We define $(1,1)$-tensor field $H$, by $%
d\eta(X,Y)=g(HX,Y)$. Later on we will show that $H=0$.

\begin{proposition}
\label{killing}For a nearly Kenmotsu manifold we have 
\begin{eqnarray}
&&g(\nabla _{X}\xi ,Y)+g(X,\nabla _{Y}\xi )=2g(\phi X,\phi Y),\quad \nabla
_{X}\xi =-\phi ^{2}X+HX,  \label{8} \\
&&\nabla _{\xi }\phi =\phi H,\quad \phi H+H\phi =0,\text{ }H\xi =0,\quad
\nabla _{\xi }\xi =0.
\end{eqnarray}
\end{proposition}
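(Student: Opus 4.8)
The plan is to extract everything from the single defining identity \eqref{4} together with the algebraic relations of the almost contact metric structure. The first move is to differentiate $\phi\xi=0$: applying $\nabla_X$ gives $(\nabla_X\phi)\xi=-\phi(\nabla_X\xi)$, and since $\eta(\nabla_X\xi)=\tfrac12 X(g(\xi,\xi))=0$, applying $\phi$ once more yields the inversion $\nabla_X\xi=\phi\big((\nabla_X\phi)\xi\big)$. Next I set $Y=\xi$ in \eqref{4}; using $\phi\xi=0$ and $\eta(\xi)=1$ this reads $(\nabla_X\phi)\xi+(\nabla_\xi\phi)X=-\phi X$. Substituting into the inversion produces the master formula $\nabla_X\xi=-\phi^2X-\phi(\nabla_\xi\phi)X$, which already has the right shape: a $g$-symmetric term $-\phi^2$ plus a correction built from $\nabla_\xi\phi$.

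From here I specialise $X=\xi$ in the master formula. Since $\phi^2\xi=0$ and, by the inversion, $\phi(\nabla_\xi\phi)\xi=-\phi^2(\nabla_\xi\xi)=\nabla_\xi\xi$, the formula collapses to $\nabla_\xi\xi=-\nabla_\xi\xi$, hence $\nabla_\xi\xi=0$. Knowing this, I differentiate $\phi^2=-Id+\eta\otimes\xi$ in the direction $\xi$; because $\nabla_\xi\xi=0$ forces $\nabla_\xi(\eta\otimes\xi)=0$, I obtain the anticommutation relation $(\nabla_\xi\phi)\phi+\phi(\nabla_\xi\phi)=0$. I also record the standing fact that $\nabla_X\phi$ is $g$-skew in its two vector arguments (a consequence of the skewness of the fundamental form $\Phi$), so in particular $\nabla_\xi\phi$ is a skew operator.

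The crux, and the step I expect to be the main obstacle, is to show that the correction term $-\phi(\nabla_\xi\phi)$ in the master formula is $g$-skew-symmetric, for then the unique splitting of $\nabla\xi$ into symmetric and skew parts identifies the symmetric operator $-\phi^2$ as the symmetric part and the correction as the skew part. I plan to prove skewness by combining the two facts above: writing $g\big((\nabla_\xi\phi)X,\phi Y\big)=-g\big(X,(\nabla_\xi\phi)\phi Y\big)$ by skewness and then replacing $(\nabla_\xi\phi)\phi Y=-\phi(\nabla_\xi\phi)Y$ by the anticommutation relation, one finds $g\big((\nabla_\xi\phi)X,\phi Y\big)=-g\big((\nabla_\xi\phi)Y,\phi X\big)$, which is exactly the skewness of $-\phi(\nabla_\xi\phi)$. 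Consequently the symmetric part of the master formula gives $g(\nabla_X\xi,Y)+g(\nabla_Y\xi,X)=2g(-\phi^2X,Y)=2g(\phi X,\phi Y)$, the first relation in \eqref{8}. Comparing the skew part with $d\eta(X,Y)=\tfrac12\big(g(\nabla_X\xi,Y)-g(\nabla_Y\xi,X)\big)=g(HX,Y)$ identifies $HX=-\phi(\nabla_\xi\phi)X$, whence $\nabla_X\xi=-\phi^2X+HX$.

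The remaining identities then follow quickly. Applying $\phi$ to $HX=-\phi(\nabla_\xi\phi)X$ and using $\eta\circ(\nabla_\xi\phi)=0$ (which holds since $(\nabla_\xi\phi)\xi=-\phi\nabla_\xi\xi=0$) gives $\phi HX=-\phi^2(\nabla_\xi\phi)X=(\nabla_\xi\phi)X$, i.e.\ $\nabla_\xi\phi=\phi H$. For $\phi H+H\phi=0$ I observe that $\phi$ and $H$ are both $g$-skew, so the $g$-adjoint of $\phi H$ is $H\phi$; since $\phi H=\nabla_\xi\phi$ is itself skew, its adjoint equals $-\phi H$, giving $H\phi=-\phi H$. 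Finally $H\xi=0$ is immediate from $g(H\xi,Y)=d\eta(\xi,Y)=\tfrac12\big(g(\nabla_\xi\xi,Y)-g(\nabla_Y\xi,\xi)\big)=0$, using $\nabla_\xi\xi=0$ and $g(\nabla_Y\xi,\xi)=0$; together with $\nabla_\xi\xi=0$ this completes all the assertions of the Proposition.
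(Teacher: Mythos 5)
Your proof is correct and follows essentially the same route as the paper: both arguments extract $\nabla_\xi\xi=0$ and the key identity $(\nabla_X\phi)\xi=-(\nabla_\xi\phi)X-\phi X$ from \eqref{4} evaluated at $Y=\xi$, and then split $\nabla\xi$ into the symmetric part $-\phi^2$ and the skew part $H$ defined by $d\eta$. The only organizational difference is that you establish the symmetric relation by showing the correction term $-\phi(\nabla_\xi\phi)$ is $g$-skew (using the anticommutation obtained by differentiating $\phi^2=-Id+\eta\otimes\xi$ along $\xi$), whereas the paper differentiates the compatibility $g(\phi X,\phi Y)=g(X,Y)-\eta(X)\eta(Y)$ along $\xi$ — these amount to the same fact, and your explicit identification $H=-\phi(\nabla_\xi\phi)$ makes $\nabla_\xi\phi=\phi H$ and $\phi H+H\phi=0$ fall out immediately.
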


\begin{proof}
By (\ref{4}), $(\nabla _{\xi }\phi )\xi =\phi \nabla _{\xi }\xi =0$, hence $%
\nabla _{\xi }\xi =0$, and $\nabla _{\xi }\eta =0$. Now, $g(\phi X,\phi
Y)=g(X,Y)-\eta (X)\eta (Y)$, yields%
\begin{eqnarray*}
&&0=g((\nabla _{\xi }\phi )X,\phi Y)+g((\nabla _{\xi }\phi )Y,\phi
X)=-g((\nabla _{X}\phi )\xi ,\phi Y)-g((\nabla _{Y}\phi )\xi ,\phi X) \\
{} &&-2g(\phi X,\phi Y)=g(\nabla _{X}\xi ,Y)+g(\nabla _{Y}\xi ,X)-2g(\phi
X,\phi Y).
\end{eqnarray*}%
With help of definition of $H$, $\nabla _{X}\xi =-\phi ^{2}X+HX$. By $\phi
\xi =0$, and $\eta (\phi X)=0$ 
\begin{eqnarray}
&&0=(\nabla _{X}\phi )\xi +\phi \nabla _{X}\xi =-(\nabla _{\xi }\phi )X+\phi
HX, \\
&&0=\eta ((\nabla _{X}\phi )Y)+\eta ((\nabla _{Y}\phi )X)=-g((\nabla
_{X}\phi )\xi ,Y)-g((\nabla _{Y}\phi )\xi ,X) \\
&=&g((\nabla _{\xi }\phi )X,Y)+g((\nabla _{\xi }\phi )Y,X)=g(\phi
HX,Y)+g(\phi HY,X)  \notag \\
&=&g((\phi H+H\phi )X,Y).  \notag
\end{eqnarray}
\end{proof}


\begin{proposition}
\label{2-form} The fundamental form satisfies 
\begin{gather}  \label{2-FORM1}
3d\Phi (X,Y,Z)=-3g((\nabla _{X}\phi )Y,Z)-\eta (Y)g(\phi X,Z)+ \eta
(Z)g(\phi X,Y) \\
{}-{}2\eta (X)g(\phi Y,Z).  \notag \\
d\Phi(X,Y,Z) -\eta(Z)(\nabla_\xi\Phi)(X,Y)= \dfrac{1}{4}g([\phi,\phi](X,Y),%
\phi Z) +2(\eta\wedge \Phi)(X,Y,Z).
\end{gather}
\end{proposition}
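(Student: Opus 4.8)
The plan is to reduce both identities to statements about the Levi-Civita connection, using two elementary facts: for the torsion-free $\nabla$ the exterior derivative of a form is the alternation of its covariant derivatives, and each $\nabla_X\phi$ is skew-adjoint with respect to $g$, i.e. $g((\nabla_X\phi)Y,Z)=-g(Y,(\nabla_X\phi)Z)$, which follows by differentiating the compatibility relation (\ref{2}). Throughout I will also use the consequence of differentiating $\phi^2=-Id+\eta\otimes\xi$, namely $(\nabla_W\phi)(\phi U)+\phi(\nabla_W\phi)U=(\nabla_W\eta)(U)\,\xi+\eta(U)\,\nabla_W\xi$, together with Proposition \ref{killing} for $\nabla_W\xi$.

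For the first identity I would expand $3d\Phi(X,Y,Z)$ as the cyclic sum $(\nabla_X\Phi)(Y,Z)+(\nabla_Y\Phi)(Z,X)+(\nabla_Z\Phi)(X,Y)$ and rewrite each summand via $(\nabla_X\Phi)(Y,Z)=g(Y,(\nabla_X\phi)Z)$, applying skew-adjointness where convenient. The two summands carrying the ``wrong'' order of differentiation produce $(\nabla_Y\phi)X$ and $(\nabla_Z\phi)X$, which I replace through the nearly Kenmotsu condition (\ref{4}) by $-(\nabla_X\phi)Y$ and $-(\nabla_X\phi)Z$ plus the corresponding $\eta\otimes\phi$ terms. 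Collecting the three surviving $\nabla\phi$-contributions yields the factor $-3g((\nabla_X\phi)Y,Z)$, while the $\eta$-terms, after using $g(\phi Z,Y)=-g(\phi Y,Z)$, combine into exactly the coefficients $-2\eta(X)$, $-\eta(Y)$, $\eta(Z)$ in (\ref{2-FORM1}); this part is routine.

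For the second identity I would first write the Nijenhuis torsion in terms of $\nabla$: since $\nabla$ is torsion-free, $[\phi,\phi](X,Y)=(\nabla_{\phi X}\phi)Y-(\nabla_{\phi Y}\phi)X-\phi(\nabla_X\phi)Y+\phi(\nabla_Y\phi)X$. The essential reduction is to eliminate the $\phi$-direction derivatives $(\nabla_{\phi X}\phi)Y$: applying (\ref{4}) to the pair $(\phi X,Y)$ rewrites this as $-(\nabla_Y\phi)(\phi X)-\eta(Y)\phi^2X$, and the differentiated-$\phi^2$ relation then expresses $(\nabla_Y\phi)(\phi X)$ through $\phi(\nabla_Y\phi)X$ plus $\xi$-valued terms. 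After this substitution all $\phi$-direction derivatives disappear and $[\phi,\phi](X,Y)$ becomes a combination of $\phi(\nabla_X\phi)Y$, $\phi(\nabla_Y\phi)X$, a $\xi$-component proportional to $d\eta(X,Y)$, the terms $\eta(Y)\nabla_X\xi-\eta(X)\nabla_Y\xi$, and $\eta(X)\phi^2Y-\eta(Y)\phi^2X$. Pairing with $\phi Z$ and using $g(\phi A,\phi B)=g(A,B)-\eta(A)\eta(B)$ together with $\eta(\phi\,\cdot)=0$ kills the $\xi$-component, converts the $\phi(\nabla\phi)$ terms into $g((\nabla\phi)\cdot,\cdot)$ up to an $\eta(Z)$-multiple of $\eta((\nabla_X\phi)Y)$, and turns the $\nabla\xi$ and $\phi^2$ terms into multiples of $\Phi$. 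Recognising that the antisymmetric combination $\eta((\nabla_X\phi)Y)-\eta((\nabla_Y\phi)X)$ equals $-2\Phi(X,Y)-2(\nabla_\xi\Phi)(X,Y)$ (again via Proposition \ref{killing}, using $\phi H+H\phi=0$ and the skew-symmetry of $H$), and then substituting the first identity (\ref{2-FORM1}) for $d\Phi$, lets me match the two sides.

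The main obstacle is the $\xi$-direction bookkeeping in this last step. Tracking the contributions of $\nabla_X\xi=-\phi^2X+HX$, one must show that the pieces carrying the factor $\eta(Z)$ assemble precisely into the single term $\eta(Z)(\nabla_\xi\Phi)(X,Y)$, that the remaining $\eta(X)$- and $\eta(Y)$-terms reorganise into $2(\eta\wedge\Phi)(X,Y,Z)$ with the wedge normalised so that $(\eta\wedge\Phi)(X,Y,Z)=\tfrac13\{\eta(X)\Phi(Y,Z)-\eta(Y)\Phi(X,Z)+\eta(Z)\Phi(X,Y)\}$, and that the scalar factors $\tfrac14$ and $2$ in the statement are the correct ones. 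This is exactly where the antisymmetric part $H$ of $\nabla\xi$ enters, so keeping all $H$-contributions explicit is the surest way to verify the coefficients and makes transparent how the identity interacts with the vanishing of $H$ established in the next section.
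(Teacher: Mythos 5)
Your proposal follows essentially the same route as the paper: expand $3d\Phi$ as the cyclic sum of covariant derivatives and apply the nearly Kenmotsu condition for the first identity, then express $[\phi,\phi]$ through $\nabla\phi$, eliminate the $\phi$-direction derivatives via (\ref{4}) and the differentiated $\phi^2$-relation, and pair with $\phi Z$ for the second. Your explicit tracking of the $H$-contributions and of $\eta((\nabla_X\phi)Y)-\eta((\nabla_Y\phi)X)$ is in fact more careful than the paper's own computation, which silently drops the $\xi$- and $H$-terms in its formula for $\tfrac12[\phi,\phi]$.
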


\begin{proof}
From identities 
\begin{gather}
3d\Phi(X,Y,Z)= (\nabla_X\Phi)(Y,Z)+(\nabla_Y\Phi)(Z,X)+(\nabla_Z\Phi)(X,Y),
\\
{} [\phi,\phi ](X,Y) = -\phi
(\nabla_X\phi)Y+\phi(\nabla_Y\phi)X+(\nabla_{\phi X}\phi)Y -(\nabla_{\phi
Y}\phi)X.
\end{gather}
we obtain 
\begin{multline}
3d\Phi (X,Y,Z) =-g((\nabla _{X}\phi )Y,Z)-g((\nabla_Z\phi)X,Y)+
g((\nabla_Y\phi)X,Z) \\
=-3g((\nabla _{X}\phi )Y,Z) {} -2\eta (X)g(\phi Y,Z)+\eta (Y)g(\phi
Z,X)-\eta (Z)g(\phi Y,X),
\end{multline}
\begin{equation}
\dfrac{1}{2}[\phi,\phi](X,Y) =
-\phi(\nabla_X\phi)Y+\phi(\nabla_Y\phi)X+\eta(Y)X-\eta(X)Y.
\end{equation}%
Hence 
\begin{multline*}
6d\Phi(X,Y,Z)= -3g((\nabla_X\phi)Y-(\nabla_Y\phi)X,Z) +\eta(Y)g(\phi X,Z)
{}-\eta(X)g(\phi Y,Z) + 2\eta(Z)g(\phi X,Y) \\
= \dfrac{3}{2}g([\phi,\phi](X,Y),\phi Z) +4\eta(X)g(Y,\phi Z) {}+{}
4\eta(Y)g(Z,\phi X)+4\eta(Z)g(X,\phi Y)+ \\
{} 6\eta(Z)(\nabla_\xi\Phi)(X,Y) = \dfrac{3}{2}g([\phi,\phi],\phi Z) + 12
(\eta\wedge\Phi)(X,Y,Z)+6\eta(Z)(\nabla_\xi\Phi)(X,Y).
\end{multline*}
\end{proof}

\section{Structure of nearly Kenmotsu Manifolds}

In this section, we will proof curvature relations for nearly Kenmotsu
manifold. Let $R$ be Riemann curvature operator 
\begin{equation*}
R(X,Y)Z = (\nabla^2_{X,Y}Z)-(\nabla^2_{Y,X}Z) =
[\nabla_X,\nabla_Y]Z-\nabla_{[X,Y]}Z,
\end{equation*}
by the same letter we denote corresponding $(0,4)$-tensor 
\begin{equation*}
R(X,Y,Z,W)=g(R(X,Y)Z,W).
\end{equation*}

\begin{theorem}
Let $(M,\phi ,\xi ,\eta ,g)$ be nearly Kenmotsu manifold. We have following
curvature relations 
\begin{gather}
R(\phi X,Y,Z,W)+R(X,\phi Y,Z,W)+R(X,Y,\phi Z,W)+R(X,Y,Z,\phi W)=0,
\label{FEN1} \\
R(\phi X,\phi Y,Z,W)=R(X,Y,\phi Z,\phi W),  \label{RRR} \\
R(\phi X,\phi Y,\phi Z,\phi W)=R(X,Y,Z,W)-\eta (X)R(\xi ,Y,Z,W)+\eta
(Y)R(\xi ,X,Z,W).  \label{R5}
\end{gather}
\end{theorem}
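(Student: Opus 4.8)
The plan is to read these three relations as the almost-contact analogues of Gray's curvature identities for nearly K\"{a}hler manifolds, and to generate every curvature term from the Ricci commutation identity for the second covariant derivative of $\phi$, fed by the covariant derivative of the defining identity (\ref{4}). Writing $\nabla^2_{X,Y}=\nabla_X\nabla_Y-\nabla_{\nabla_XY}$ for the Hessian operator used in the text, the rule $\nabla^2_{X,Y}\phi-\nabla^2_{Y,X}\phi=R(X,Y)\cdot\phi$ together with the skew-symmetry of $\phi$ (relation (\ref{2})) yields the bridge
\[ g\big((\nabla^2_{X,Y}\phi-\nabla^2_{Y,X}\phi)Z,W\big)=R(X,Y,\phi Z,W)+R(X,Y,Z,\phi W), \]
which I will call $(\star)$. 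I would first prove (\ref{FEN1}), then deduce (\ref{RRR}) from it, and finally read off (\ref{R5}) from (\ref{RRR}).

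For (\ref{FEN1}) I would group its left-hand side as $[R(\phi X,Y,Z,W)+R(X,\phi Y,Z,W)]+[R(X,Y,\phi Z,W)+R(X,Y,Z,\phi W)]$. The second bracket is exactly the right-hand side of $(\star)$; the first bracket, after using the pair-symmetry $R(A,B,C,D)=R(C,D,A,B)$, becomes $R(Z,W,\phi X,Y)+R(Z,W,X,\phi Y)$, which is again of the form $(\star)$ with the pairs interchanged. Hence (\ref{FEN1}) is equivalent to the statement that the tensor $T(X,Y,Z,W):=g\big((\nabla^2_{X,Y}\phi-\nabla^2_{Y,X}\phi)Z,W\big)$ is skew under interchange of its two argument-pairs. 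To establish this I would set $K(X,Y,Z,W):=g\big((\nabla^2_{X,Y}\phi)Z,W\big)$, which is skew in $(Z,W)$ because $\nabla^2_{X,Y}\phi$ is a skew endomorphism, and compute the symmetric combination $K(X,Y,Z,W)+K(X,Z,Y,W)$ by differentiating (\ref{4}) covariantly, where $(\nabla_Z\eta)(Y)=g(\nabla_Z\xi,Y)$ is supplied by $\nabla_X\xi=-\phi^2X+HX$ from Proposition \ref{killing}. Feeding these two symmetry relations into the first Bianchi identity $\sum_{\mathrm{cyc}}R(X,Y)Z=0$ is a standard permutation computation that forces the required pair-antisymmetry of $T$.

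To pass from (\ref{FEN1}) to (\ref{RRR}) I would substitute, one slot at a time, the image under $\phi$ into (\ref{FEN1}), obtaining four identities, and form the alternating combination of the two first-pair substitutions minus the two second-pair substitutions. All mixed single-$\phi$ terms cancel, and after replacing $\phi^2=-\mathrm{Id}+\eta\otimes\xi$ the combination collapses to $2\big[R(\phi X,\phi Y,Z,W)-R(X,Y,\phi Z,\phi W)\big]$ plus a residue consisting of curvature terms each carrying one $\xi$-entry. These residual terms are annihilated by inserting the expression for $R(X,Y)\xi$ obtained by differentiating $\nabla_X\xi=-\phi^2X+HX$ and invoking $\phi H+H\phi=0$ and $H\xi=0$ from Proposition \ref{killing}; what remains is exactly (\ref{RRR}).

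Finally (\ref{R5}) follows cleanly: reading (\ref{RRR}) as $R(\phi A,\phi B,C,D)=R(A,B,\phi C,\phi D)$ with $A=\phi X,\ B=\phi Y,\ C=Z,\ D=W$ gives $R(\phi X,\phi Y,\phi Z,\phi W)=R(\phi^2X,\phi^2Y,Z,W)$, and expanding $\phi^2X=-X+\eta(X)\xi$ and $\phi^2Y=-Y+\eta(Y)\xi$ while using $R(\xi,\xi,Z,W)=0$ reproduces (\ref{R5}) verbatim. The main obstacle is not this last step but the bookkeeping inside (\ref{FEN1}) and (\ref{RRR}): reconstructing the full Hessian $\nabla^2\phi$ from its symmetrized parts through the Bianchi identity, and checking that every $\eta$- and $H$-correction term produced by differentiating (\ref{4}) and by the $\xi$-curvature actually cancels, since these identities must hold before $H=0$ is available.
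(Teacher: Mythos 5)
Your overall architecture coincides with the paper's: your bridge identity $(\star)$ is exactly the Ricci identity the authors start from, your symmetrized Hessian of $\phi$ obtained by differentiating (\ref{4}) is their tensor $T$ of (\ref{FEN2}), and your passages from (\ref{FEN1}) to (\ref{RRR}) (alternating substitution of $\phi$ into the four slots) and from (\ref{RRR}) to (\ref{R5}) (set $X\to\phi X$, $Y\to\phi Y$ and expand $\phi^2$) are verbatim the ones in the text. The gap sits in the step you dismiss as ``a standard permutation computation that forces the required pair-antisymmetry.'' When you differentiate (\ref{4}) you must substitute $(\nabla_X\eta)Z=g(-\phi^2X+HX,Z)$, so the symmetrized Hessian --- and therefore the Bianchi permutation built on it --- produces terms quadratic in $H$ and $\phi$. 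In the paper, the antisymmetrization of (\ref{FEN7}) yields the left-hand side of (\ref{FEN1}) \emph{plus} a residue of the form $4g(\phi Y,W)g(HX,Z)+4g(\phi X,Z)g(HY,W)+2g(\phi X,W)g(HY,Z)+\cdots$, and the authors state explicitly that (\ref{FEN1}) is proved only if $H=0$. These terms do not cancel formally. Your closing remark that the $H$-corrections ``actually cancel, since these identities must hold before $H=0$ is available'' is precisely backwards: the identities are not available before $H=0$ is proved, and assuming the cancellation begs the question.

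Proving $H=0$ (equivalently $d\eta=0$) is the real content of the theorem's proof and occupies most of it: one specializes the permuted identity to $X=\xi$ to obtain $R(\xi,Z,\phi Y,\phi W)=0$, computes $R(Y,Z,\xi,X)$ from $\nabla\xi=-\phi^2+H$ together with the vanishing of the cyclic sum of $\nabla H$, and then runs an eigenvalue argument: for a local unit field $Y$ with $\eta(Y)=0$ and $H^2Y=\lambda Y$ one finds $d\lambda=-4\lambda\eta$, which forces $\lambda=0$ everywhere and hence $H=0$ since $H$ is skew-symmetric. None of this appears in your proposal, and without it neither (\ref{FEN1}) nor the cancellation of the $\xi$-curvature residue in your derivation of (\ref{RRR}) can be justified --- that cancellation uses $R(\xi,X,Y,Z)=\eta(Y)g(X,Z)-\eta(Z)g(X,Y)$, i.e.\ (\ref{i24}), which is itself a consequence of $\nabla\xi=-\phi^2$, hence of $H=0$. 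You need to supply the $H=0$ argument (or an equivalent proof that $\eta$ is closed) before any of the three relations can be claimed.
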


\begin{proof}
Let $T$ be $(1,3)$-tensor defined by (cf. (\ref{4})) 
\begin{equation}
(\nabla _{X,Y}^{2}\phi )Z+(\nabla _{X,Z}^{2}\phi )Y=T(X,Y,Z),  \label{FEN2}
\end{equation}%
clearly $T(X,Y,Z)=T(X,Z,Y)$. For simplicity $T$ will also denote
corresponding $(0,4)$-tensor 
\begin{equation*}
T(X,Y,Z,W)=g(T(X,Y,Z),W).
\end{equation*}%
From the Ricci identity, 
\begin{equation*}
0=R(X,Y,Z,\phi W)-R(X,Y,W,\phi Z)-g((\nabla _{X,Y}^{2}\phi )Z,W)+g((\nabla
_{Y,X}^{2}\phi )Z,W)
\end{equation*}%
eq. (\ref{FEN2}), and the first Bianchi identity, we find 
\begin{gather}
R(X,Y,Z,\phi W)=R(X,Y,W,\phi Z)+g((\nabla _{X,Y}^{2}\phi )Z,W-g((\nabla
_{Y,X}^{2}\phi )Z,W)  \label{FEN3} \\
=R(X,Y,W,\phi Z)-g((\nabla _{X,Z}^{2}\phi )Y,W){}+{}g((\nabla _{Y,Z}^{2}\phi
)X,W)+  \notag \\
T(X,Z,Y,W)-T(Y,Z,X,W),  \notag \\
R(X,Y,Z,\phi W)=R(X,Z,Y,\phi W)-R(Y,Z,X,\phi W)=R(X,Z,Y,\phi W) \\
{}-{}R(Y,Z,W,\phi X)-g((\nabla _{Y,Z}^{2}\phi )(X,W)+g((\nabla
_{Z,Y}^{2}\phi )X,W),  \notag
\end{gather}%
comparing right hand sides of these equations, we obtain%
\begin{gather}
R(X,Z,Y,\phi W)-R(Y,Z,W,\phi X)-R(X,Y,W,\phi Z)+g((\nabla _{Z,Y}^{2}\phi
)X,W)+  \label{FEN5} \\
{}g((\nabla _{X,Z}^{2}\phi )Y,W)+T(Y,Z,X,W)-T(X,Z,Y,W)=2g((\nabla
_{Y,Z}^{2}\phi )X,W),  \notag
\end{gather}%
we note, that 
\begin{gather}
g((\nabla _{Z,Y}^{2}\phi )X,W)+g((\nabla _{X,Z}^{2}\phi )Y,W)=R(X,Z,Y,\phi W)
\\
{}-R(X,Z,W,\phi Y)+T(Z,X,Y,W),  \notag \\
g((\nabla _{Y,Z}^{2}\phi )X,W)=g((\nabla _{Y,W}^{2}\phi )Z,X)-T(Y,W,Z,X),
\end{gather}%
which being taken into account in (\ref{FEN5}), follow 
\begin{gather}
2R(X,Z,Y,\phi W)-R(X,Y,W,\phi Z)-R(Y,Z,W,\phi X)-R(X,Z,W,\phi Y)+
\label{FEN7} \\
T(Y,Z,X,W)+T(Z,X,Y,W)-T(X,Y,Z,W)+  \notag \\
2T(Y,W,Z,X)=2g((\nabla _{Y,W}^{2}\phi )Z,X).  \notag
\end{gather}%
By straightforward computations 
\begin{multline}
T(Y,Z,X,W)+T(Z,X,Y,W)-T(X,Y,Z,W)+2T(Y,W,Z,X)=  \label{pioT} \\
C(X,Y,Z,W)+2g(\phi Y,W)g(HX,Z)+2g(\phi X,Z)g(HY,W)+ \\
2g(\phi X,W)g(HY,Z)+2g(\phi Z,W)g(HX,Y)+ \\
2g(\phi Z,X)g(Y,\phi ^{2}W)+\eta (X)\eta (Y)g(\phi Z,W){}-\eta (Z)\eta
(Y)g(\phi X,W),
\end{multline}%
where 
\begin{gather}
C(X,Y,Z,W)=-\eta (Y)g((\nabla _{Z}\phi )X,W)+\eta (Y)g((\nabla _{X}\phi )Z,W)
\\
{}-2\eta (W)g((\nabla _{Y}\phi )Z,X).  \notag
\end{gather}%
The anti-symmetrization of (\ref{FEN7}), in $Y$ and $W$, and the first
Bianchi identity, follow 
\begin{gather*}
3R(\phi X,Z,Y,W)+3R(X,\phi Z,Y,W)+3R(X,Z,\phi Y,W)+3R(X,Z,Y,\phi W)+ \\
4g(\phi Y,W)g(HX,Z)+4g(\phi X,Z)g(HY,W)+2g(\phi X,W)g(HY,Z) \\
{}-2g(\phi X,Y)g(HW,Z)+2g(\phi Z,W)g(HX,Y)-2g(\phi Z,Y)g(HX,W)=0,
\end{gather*}%
now (\ref{FEN1}) will be proved, if $H=0$. We shall focus on the proof that $%
H=0$.

For $X=\xi $ ($H\xi =\phi \xi =0$) we obtain 
\begin{gather}
R(\xi ,\phi Z,Y,W)+R(\xi ,Z,\phi Y,W)+R(\xi ,Z,Y,\phi W)=0,  \label{Rxi} \\
-R(\xi ,Z,\phi Y,W)-R(\xi ,\phi Z,Y,W)+\eta (Y)R(\xi ,\phi Z,\xi ,W)+ \\
R(\xi ,\phi Z,\phi Y,\phi W)=0,  \notag
\end{gather}%
hence 
\begin{equation}
R(\xi ,Z,Y,\phi W)+R(\xi ,\phi Z,\phi Y,\phi W)+\eta (Y)R(\xi ,\phi Z,\xi
,W)=0,  \label{Rxi2}
\end{equation}%
and 
\begin{gather}
-R(\xi ,Z,\phi Y,W)+R(\xi ,\phi Z,Y,W)=\eta (W)R(\xi ,Z,\xi ,\phi Y) \\
{}-\eta (W)R(\xi ,\phi Z,\xi ,Y)+\eta (Y)R(\xi ,\phi Z,\xi ,W),  \notag
\end{gather}%
by the last equation, we can simplify (\ref{Rxi}), to 
\begin{gather}
3R(\xi ,\phi Z,Y,W)=2\eta (Y)R(\xi ,\phi Z,\xi ,W)+2\eta (W)R(\xi ,Z,\xi
,\phi Y) \\
{}-\eta (W)R(\xi ,\phi Z,\xi ,Y)-\eta (Y)R(\xi ,Z,\xi ,W).  \notag \\
R(\xi ,Z,\phi Y,\phi W)=0.
\end{gather}

For $\nabla\xi=-\phi^2+H$, 
\begin{gather}  \label{rxi}
R(Y,Z,\xi ,X)=-g(\nabla _{Y}\phi ^{2})X,Z)+g(\nabla _{Z}\phi ^{2})X,Y) \\
{}-{}g((\nabla _{Y}H)X,Z)+g((\nabla _{Z}H)X,Y),  \notag
\end{gather}
taking cycling sum, by Bianchi identity 
\begin{equation*}
g((\nabla _{Z}H)X,Y)+g((\nabla _{X}H)Y,Z)-g((\nabla _{Y}H)X,Z)=0,
\end{equation*}%
hence 
\begin{gather}
R(Y,Z,\xi ,X) =-g((\nabla _{Y}\phi ^{2})X,Z)+g((\nabla _{Z}\phi
^{2})X,Y)-g(\nabla _{X}H)Y,Z) \\
=\eta (Y) g(X,Z) -\eta (Z) g(X,Y) +\eta(Y)g(X,HZ) -\eta(Z)g(X,HY)  \notag \\
-2\eta (X)g(Z,HY)-g((\nabla _{X}H)Y,Z),  \notag \\
0 = R(\xi ,X,\phi Y,\phi Z) = -2\eta (X)g(H\phi Y,\phi
Z)-g((\nabla_{X}H)\phi Y,\phi Z) \\
=2\eta(X)g(HY,Z)-g((\nabla_XH)\phi Y,\phi Z).  \notag
\end{gather}
Let take local unit eigenvector field $Y$, $\eta(Y)=0$, $H^{2}Y=\lambda Y$,
note that $H^2\phi Y=-\lambda \phi Y$, as $\phi H+H\phi=0$, then 
\begin{gather}
0=R(\xi,X,\phi Y,\phi HY)= 2\lambda\eta(X)-g((\nabla_XH)\phi Y,\phi HY)=
2\lambda\eta(X) \\
{}-\dfrac{1}{2}((\nabla_XH^2)\phi Y,\phi Y) = 2\lambda\eta(X)+\dfrac{1}{2}%
d\lambda(X),  \notag
\end{gather}
so $d\lambda=-4\lambda\eta$, as $X$ is arbitrary, in consequence $\lambda =0$
or $d\eta =0$, and $H=0$ .

To proof (\ref{RRR}), let denote the left hand side of (\ref{FEN1}) by $%
\mathcal{R}_{l}$, then 
\begin{gather}
0=\mathcal{R}_{l}(\phi X,Y,Z,W)+\mathcal{R}_{l}(X,\phi Y,Z,W)-\mathcal{R}%
_{l}(X,Y,\phi Z,W) \\
{}-\mathcal{R}_{l}(X,Y,Z,\phi W)=2R(\phi X,\phi Y,Z,W)-2R(X,Y,\phi Z,\phi W),
\notag
\end{gather}%
now (\ref{R5}) is immediate.
\end{proof}

\begin{proposition}
For nearly Kenmotsu manifold we have 
\begin{gather}  \label{bsteg}
(\nabla_{\phi X}\phi)\phi Y +(\nabla_X\phi)Y -2g(\phi X, Y)\xi+\eta(Y)\phi
X=0.
\end{gather}
\end{proposition}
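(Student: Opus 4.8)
The plan is to lean on the fact that, by the Theorem just proved, $H=0$ for any nearly Kenmotsu manifold, so that $\nabla_X\xi=-\phi^2X=X-\eta(X)\xi$ and hence $(\nabla_X\eta)(Y)=g(\nabla_X\xi,Y)=g(\phi X,\phi Y)$. Throughout I abbreviate $P(X,Y):=(\nabla_X\phi)Y$ and record three structural facts that I will use repeatedly: $P$ is skew in its last slot, $g(P(X,Y),\xi)=-g((\nabla_X\phi)\xi,Y)=g(\phi X,Y)$ (because $(\nabla_X\phi)\xi=-\phi\nabla_X\xi=-\phi X$), and therefore $\phi^2P(X,Y)=-P(X,Y)+g(\phi X,Y)\xi$. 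The defining relation \eqref{4} I will call (NK): $P(X,Y)+P(Y,X)=-\eta(Y)\phi X-\eta(X)\phi Y$.

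First I would differentiate the algebraic identity $\phi^2=-\mathrm{Id}+\eta\otimes\xi$ and feed in the two $H=0$ formulas above to obtain, for all $A,B$,
\begin{equation*}
P(A,\phi B)+\phi P(A,B)=g(\phi A,\phi B)\xi+\eta(B)\bigl(A-\eta(A)\xi\bigr).\tag{$\mathrm{I}$}
\end{equation*}
Setting $A=\phi X$, $B=Y$ and using $\phi^2X=-X+\eta(X)\xi$, $\eta(\phi X)=0$ collapses the right-hand side to
\begin{equation*}
(\nabla_{\phi X}\phi)\phi Y+\phi\bigl((\nabla_{\phi X}\phi)Y\bigr)=g(\phi X,Y)\xi+\eta(Y)\phi X.\tag{$\mathrm{I}'$}
\end{equation*}
This already isolates the target term $(\nabla_{\phi X}\phi)\phi Y$, so the entire problem reduces to evaluating $\phi\bigl((\nabla_{\phi X}\phi)Y\bigr)$.

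To do that I would first convert $(\nabla_{\phi X}\phi)Y$ into an expression involving only the undifferentiated-direction derivative $\phi P(X,Y)$. Applying (NK) to the pair $(\phi X,Y)$ gives $P(\phi X,Y)=-P(Y,\phi X)+\eta(Y)X-\eta(X)\eta(Y)\xi$; then (I) applied to $(A,B)=(Y,X)$ rewrites $P(Y,\phi X)$ through $\phi P(Y,X)$, and finally (NK) applied to $(X,Y)$ replaces $P(Y,X)$ by $-P(X,Y)-\eta(X)\phi Y-\eta(Y)\phi X$. After collecting terms and using $g(\phi Y,\phi X)=g(X,Y)-\eta(X)\eta(Y)$, this should yield
\begin{equation*}
(\nabla_{\phi X}\phi)Y=-\phi\bigl((\nabla_X\phi)Y\bigr)+2\eta(Y)X-g(X,Y)\xi-\eta(X)\eta(Y)\xi.\tag{$\mathrm{VI}$}
\end{equation*}
Applying $\phi$ to (VI), with $\phi\xi=0$ and $\phi^2P(X,Y)=-P(X,Y)+g(\phi X,Y)\xi$, produces
\begin{equation*}
\phi\bigl((\nabla_{\phi X}\phi)Y\bigr)=(\nabla_X\phi)Y-g(\phi X,Y)\xi+2\eta(Y)\phi X.\tag{$\heartsuit$}
\end{equation*}
Substituting $(\heartsuit)$ into $(\mathrm{I}')$ and cancelling then leaves exactly $(\nabla_{\phi X}\phi)\phi Y=-(\nabla_X\phi)Y+2g(\phi X,Y)\xi-\eta(Y)\phi X$, which is the assertion.

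I expect the only real difficulty to lie in the substitution chain producing (VI): three applications of (NK) and (I) feed into one another, each carrying $\eta(X)\eta(Y)\xi$-type corrections that must cancel down to the stated right-hand side, and a single sign slip propagates. As an independent safeguard I would check the $\xi$-component of the claimed identity: using $g(P(A,B),\xi)=g(\phi A,B)$ and $g(\phi^2X,\phi Y)=g(\phi X,Y)$, it reduces to the tautology $g(\phi X,Y)+g(\phi X,Y)-2g(\phi X,Y)=0$, confirming that the content of the identity is entirely tangent to the contact distribution and that the computation above is aimed at the right component.
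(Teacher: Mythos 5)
Your proof is correct and follows essentially the same route as the paper: both differentiate $\phi^2=-\mathrm{Id}+\eta\otimes\xi$ (using $\nabla_X\xi=-\phi^2X$, i.e.\ $H=0$) to get your identity (I), then combine it with the nearly Kenmotsu condition to obtain the swap formula (VI) — which is exactly the paper's second display — before assembling the result. You merely make explicit the final substitution step that the paper leaves as ``the last above identities together follow.''
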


\begin{proof}
By $\phi^2=-Id +\eta\otimes\xi$, 
\begin{gather}
g((\nabla_X\phi)\phi Y,Z)= g((\nabla_X\phi)Y,\phi Z)+\eta(Z)g(X,Y) + \\
\eta(Y)g(X,Z)-2\eta(X)\eta(Y)\eta(Z),  \notag
\end{gather}
taking into account (\ref{4}), we obtain 
\begin{gather}
g((\nabla_{\phi X}\phi)Y,Z)= g((\nabla_X\phi)Y,\phi Z) +2\eta(Y)g(X,Z) \\
{} - \eta(Z)g(X,Y) -\eta(X)\eta(Y)\eta(Z),  \notag
\end{gather}
the last above identities, together follow (\ref{bsteg}).
\end{proof}

\begin{proposition}
For nearly Kenmotsu manifold, we have the following relations%
\begin{equation}
R(\xi ,X,Y,Z)=\eta (Y)g(X,Z)-\eta (Z)g(X,Y),  \label{i24}
\end{equation}%
\begin{equation}
Ric(\phi Y,\phi Z)=Ric(Y,Z)+2n\eta (Y)\eta (Z),  \label{i27}
\end{equation}%
\begin{equation}
Ric(Z,\phi Y)+Ric(\phi Z,Y)=0,  \label{i26}
\end{equation}%
where Ric indicates the Ricci tensor and $Q$ is the Ricci operator, $%
Ric(X,Y)=g(QX,Y)$.
\end{proposition}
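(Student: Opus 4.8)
The three identities will be established in the order (\ref{i24}), (\ref{i27}), (\ref{i26}), each feeding the next. The starting point is the fact, already established in the theorem, that $H=0$; by Proposition \ref{killing} the Reeb field then satisfies $\nabla_X\xi=-\phi^2X=X-\eta(X)\xi$, i.e. $\nabla\xi=\mathrm{Id}-\eta\otimes\xi$.

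First I would prove (\ref{i24}) by computing $R(X,Y)\xi=\nabla_X\nabla_Y\xi-\nabla_Y\nabla_X\xi-\nabla_{[X,Y]}\xi$ directly from this expression for $\nabla\xi$. Differentiating a second time, the terms involving $\nabla_XY$ and $\nabla_YX$ cancel against $\nabla_{[X,Y]}\xi$, leaving $R(X,Y)\xi=\eta(X)Y-\eta(Y)X$; lowering the index and applying the pair symmetry $R(A,B,C,D)=R(C,D,A,B)$ gives exactly (\ref{i24}). This is the same conclusion reached in the theorem from (\ref{rxi}) once $H=0$ is inserted, so one may instead simply quote that line. Two consequences of (\ref{i24}) are recorded for later use: contracting it gives $Ric(\xi,X)=-2n\,\eta(X)$, and the special value $R(\xi,X,Y,\xi)=\eta(X)\eta(Y)-g(X,Y)$.

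The substantive step is (\ref{i27}). I would contract the fourth-order identity (\ref{R5}) by setting $X=W=E_a$ and summing over an orthonormal basis adapted to the structure, $\{E_a\}=\{e_1,\dots,e_n,\phi e_1,\dots,\phi e_n,\xi\}$. On the right, the untwisted term is $Ric(Y,Z)$, while the two $\xi$-correction terms are evaluated with the two consequences of (\ref{i24}) above; the term carrying $\eta(Y)$ contracts, via the partial trace of (\ref{i24}), to the required $2n\,\eta(Y)\eta(Z)$. The one genuine obstacle is the left-hand side $\sum_a R(\phi E_a,\phi Y,\phi Z,\phi E_a)$: since $\phi$ annihilates $\xi$ and squares to $-\mathrm{Id}$ on the contact distribution, the family $\{\phi E_a\}$ is \emph{not} orthonormal ($e_i\mapsto\phi e_i$, $\phi e_i\mapsto-e_i$, $\xi\mapsto0$). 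One must account for this degeneracy: the two surviving families recombine to give exactly $Ric(\phi Y,\phi Z)$ minus the missing $\xi$-slot contribution $R(\xi,\phi Y,\phi Z,\xi)=-g(\phi Y,\phi Z)$. Substituting this and simplifying with $g(\phi Y,\phi Z)=g(Y,Z)-\eta(Y)\eta(Z)$ makes the $g(Y,Z)$ terms cancel and collapses the identity to (\ref{i27}).

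Finally (\ref{i26}) is the easiest and admits two routes. The slickest is to contract the $\phi$-Bianchi identity (\ref{FEN1}) over $X=W=E_a$: the two inner terms yield $Ric(\phi Y,Z)+Ric(Y,\phi Z)$, while the two outer sums $\sum_a R(\phi E_a,Y,Z,E_a)$ and $\sum_a R(E_a,Y,Z,\phi E_a)$ cancel because the matrix $\Phi_{ab}=g(\phi E_a,E_b)$ is skew-symmetric. Alternatively, one may replace $Z$ by $\phi Z$ in (\ref{i27}), use $\phi^2=-\mathrm{Id}+\eta\otimes\xi$ together with $Ric(\phi Y,\xi)=0$ and the symmetry of $Ric$, and read off $Ric(\phi Y,Z)+Ric(Y,\phi Z)=0$, which is (\ref{i26}). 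I expect the careful bookkeeping of the degenerate $\phi$-frame in the contraction for (\ref{i27}) to be the only place demanding real attention.
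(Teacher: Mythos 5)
Your proposal is correct and follows essentially the same route as the paper: (\ref{i24}) from $\nabla\xi=-\phi^{2}$, then (\ref{i27}) by contracting (\ref{R5}) over a $\phi$-adapted orthonormal frame with the $\xi$-slot terms evaluated via (\ref{i24}), and (\ref{i26}) as an immediate consequence of (\ref{i27}) (your second route; the contraction of (\ref{FEN1}) is a harmless alternative). Your explicit bookkeeping of the degenerate frame $\{\phi E_a\}$ and the correction term $R(\xi,\phi Y,\phi Z,\xi)=-g(\phi Y,\phi Z)$ is exactly the computation the paper carries out.
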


\begin{proof}
Eq. (\ref{i24}) is direct consequence of $\nabla \xi =-\phi ^{2}$, cf. (\ref%
{rxi}).

Let $(E_{0}=\xi ,E_{1},\ldots ,E_{n},E_{n+1},\ldots ,E_{2n})$, $\text{dim}%
M=2n+1$, denote orthonormal $\phi $-frame, $\phi E_{i}=E_{i+n}$, $\phi
E_{i+n}=-E_{i}$, $i=1,\ldots ,n$, then by (\ref{R5}), 
\begin{gather}
Ric(X,Y)=\sum_{i=1}^{n}(R(E_{i},X,Y,E_{i})+R(E_{i+n},X,Y,E_{i+n}))+R(\xi
,X,Y,\xi ) \\
=Ric(\phi X,\phi Y)+\eta (X)Ric(\xi ,Y)-R(\xi ,\phi X,\phi Y,\xi )+R(\xi
,X,Y,\xi )  \notag \\
=Ric(\phi X,\phi Y)+\eta (X)R(\xi ,Y)=Ric(\phi X,\phi Y)+Ric(\xi ,\xi )\eta
(X)\eta (Y),
\end{gather}%
now by (\ref{i24}), $Ric(\xi ,\xi )=-2n$, and the last identity is now
direct consequence of (\ref{i27}).
\end{proof}

Once we know that $d\eta=0$ we are able to describe completely local
structure of nearly Kenmotsu manifold.

\begin{theorem}
\label{Integ} Let $(M,\phi ,\xi ,\eta ,g)$ be a nearly Kenmotsu manifold.
Then

\begin{itemize}
\item[a)] The distribution $D=\ker \eta $ is completely integrable, and
maximall integral submanifolds of $D$ are totally umbilical hypersurfaces,

\item[b)] Maximall integral submanifolds naturally inherits nearly K\"ahler
structure,

\item[c)] Nearly Kenmotsu manifold is locally isometric to warped product of
real line and nearly K\"ahler manifold.
\end{itemize}
\end{theorem}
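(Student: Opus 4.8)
The plan is to handle the three parts in sequence, using throughout that $H=0$ (established in the theorem above), so that $d\eta(X,Y)=g(HX,Y)=0$ and, by Proposition \ref{killing}, $\nabla_X\xi=-\phi^2X=X-\eta(X)\xi$. For part a), complete integrability of $D=\ker\eta$ follows from Frobenius' theorem: for $X,Y\in D$ one has $\eta([X,Y])=-2\,d\eta(X,Y)=0$, so $D$ is involutive. To see that a maximal leaf $M'$ is totally umbilical, observe that $\xi$ is its unit normal, and for $X,Y\in D$ the Gauss formula gives $g(h(X,Y),\xi)=g(\nabla_XY,\xi)=-g(Y,\nabla_X\xi)=-g(X,Y)$, whence the second fundamental form is $h(X,Y)=-g(X,Y)\,\xi$, a scalar multiple of the metric; this is exactly the totally umbilical condition, with shape operator $-\mathrm{Id}$.

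For part b), I would transfer the nearly Kenmotsu identity to the leaf. On $D$ the field $J:=\phi|_D$ satisfies $J^2=-\mathrm{Id}$ and $g(JX,JY)=g(X,Y)$, so $(J,g)$ is almost Hermitian on $M'$. Writing $\nabla'$ for the induced Levi-Civita connection and using $\phi\xi=0$ together with $h(X,Y)=-g(X,Y)\xi$, the Gauss formula yields, for $X,Y\in D$, the splitting $(\nabla_X\phi)Y=(\nabla'_XJ)Y-g(X,JY)\,\xi$ into its tangential and normal parts. Restricting \eqref{4} to $X,Y\in D$, where the right-hand side vanishes, and taking tangential components gives $(\nabla'_XJ)Y+(\nabla'_YJ)X=0$, the nearly K\"ahler condition; the normal components cancel automatically by the skew-symmetry \eqref{2}.

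For part c), the idea is to construct the warped product explicitly from the flow of $\xi$. Since $d\eta=0$, locally $\eta=dt$ and $\xi=\mathrm{grad}\,t$, with $\nabla_\xi\xi=0$, so the integral curves of $\xi$ are unit-speed geodesics transverse to the leaves $\{t=\mathrm{const}\}$. Fix a leaf $N$ and let $\psi_s$ be the flow of $\xi$; then $F(s,p):=\psi_s(p)$ is a local diffeomorphism of $I\times N$ onto a neighbourhood in $M$. Because $\psi_s$ carries leaves to leaves, $dF(\partial_s)=\xi$ and $dF(T_pN)\subset D$, so the mixed components $g(\xi,\,\cdot\,)=\eta(\,\cdot\,)$ vanish on $D$ and $F^{*}g=ds^2+g_s$ has no cross terms. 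The Killing-type identity \eqref{8} of Proposition \ref{killing} gives $(\mathcal{L}_\xi g)(X,Y)=2g(\phi X,\phi Y)=2g(X,Y)$ for $X,Y\in D$, hence $\partial_s g_s=\psi_s^{*}\big((\mathcal{L}_\xi g)|_D\big)=2g_s$, which integrates to $g_s=e^{2s}g_N$. Therefore $F^{*}g=ds^2+e^{2s}g_N$, exhibiting $M$ locally as the warped product $I\times_{e^{s}}N$ of the real line with the nearly K\"ahler manifold $(N,J,g_N)$ from part b).

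The main obstacle lies in part c): one must check that the flow genuinely maps leaves to leaves so that $F^{*}g$ block-diagonalizes (no twisting term survives), and that the transported metrics $g_s$ obey the autonomous equation $\partial_s g_s=2g_s$, forcing the warping factor $e^{s}$. Alternatively, part a) together with the geodesic property of $\xi$ provides precisely an orthogonal pair consisting of a totally geodesic line field and a spherical foliation, so one may instead invoke Hiepko's warped-product decomposition theorem to obtain c) directly.
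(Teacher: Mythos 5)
Your proof is correct. Parts a) and b) essentially coincide with the paper's argument: integrability of $D$ from $d\eta=0$, the shape operator $\pm\mathrm{Id}$ from $\nabla_X\xi=-\phi^2X$, and the transfer of the nearly Kenmotsu identity to the leaf via the Gauss formula (the paper checks the polarized form $(\tilde\nabla_{\tilde X}J)\tilde X=0$, you check the symmetrized identity directly; these are equivalent). In part c) you take a genuinely different route: you integrate the flow $\psi_s$ of $\xi$ and use $(\mathcal{L}_\xi g)|_D=2g|_D$ (a consequence of (\ref{8}) with $H=0$) to obtain the ODE $\partial_s g_s=2g_s$, hence the warping factor $e^{s}$ directly, whereas the paper regards $t$ as a Riemannian submersion onto an interval and reads the warping function off the mean curvature vector $-2n\xi$ of the fibres via O'Neill's formalism. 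Your route is more self-contained and sidesteps the paper's loosely stated claim that the O'Neill tensors ``$A$ and $T^0$'' vanish (the fibres are umbilical, not totally geodesic, so $T^0$ does not literally vanish; only its trace-free part does), while the submersion route, stated carefully, buys a coordinate-free identification of the warping function from the mean curvature alone. The one point you rightly flag --- that $\psi_s$ carries leaves to leaves, so that $F^{*}g$ really block-diagonalizes --- follows from $\mathcal{L}_\xi\eta=d\iota_\xi\eta+\iota_\xi d\eta=0$, which also guarantees $d\psi_s(D)\subset D$ as needed in your Lie-derivative computation.
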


\begin{proof}
\noindent $\Rightarrow a)$ As Reeb form is closed, it is clear that $D=\ker
\eta$ is completely integrable. If $\tilde M$, denote maximal integral
submanifold of $D$, particularly $dim \tilde M=2n$, then restriction $%
\xi|_{\tilde M}$ is normal vector field, and with respect to such choice of
normal, Weingarten map is $A: \tilde X \mapsto \nabla_{\tilde X}\xi= -X$,
hence $\bar M$ is umbilical.

\noindent $\Rightarrow b)$ Let $J$ be $(1,1)$ tensor field on $\tilde M$
defined by $J\tilde X = \phi \tilde X$. This definition is correct, as $D$
is $\phi$-invariant. It is direct that $J$ is almost complex structure. We
verify 
\begin{equation*}
(\tilde\nabla_{\tilde X}J)\tilde X = \tilde \nabla_{\tilde X}J\tilde X -
J\tilde \nabla_{\tilde X}\tilde X = (\nabla_{\tilde X}\phi)(\tilde X) = 0,
\end{equation*}
as $\eta(\tilde X)=0$, and $\tilde M$ is totally umbilical.

\noindent $\Rightarrow c)$ We can choose coordinate neighborhood $U=I\times
U^{\prime }$, where $I=(-\epsilon,\epsilon)$ is non-empty interval, and $%
U^{\prime }\subset \mathbb{R}^{2n}$ is a disk, For any point $p \in U$, $%
p=(t,x^1,\ldots,x^{2n})=(t,p^{\prime })$, coordinate $t$, can be defined in
the way that $\eta=dt$, $\xi=\frac{\partial}{\partial t}$. If we set $\hat
g=dt^2$ on $I$, then $\pi: p \mapsto t$ is Riemannian submersion, with
fibers $\pi^{-1}(t)=t\times U^{\prime }$. We find that O'Neill tensors $A$,
and $T^0$ vanish, hence $(U,g)$ is warped product $(I\times U^{\prime 2}+f^2
h)$, as metric $h$ we may take $h=\iota_0^*g$, $\iota_0:p^{\prime }\mapsto
0\times U^{\prime }$. As mean vector field $N=-2n\xi$, we have $\pi_*(N)=-2n
\ln |f|\pi_*(\xi)$, hence ${d\ln|f|}/{dt}=1$, $f^2=Ce^{2t}$, and $C=1$, by
our choice of $h$.
\end{proof}

\begin{remark}
In \cite{dileo}, Dileo and Pastore proved special almost Kenmotsu manifold
is locally isometric to warped product of real line and almost Kaehler
manifold.
\end{remark}

\section{Some theorems about nearly Kenmotsu manifolds}

In this section we will show that normal and nearly Kenmostu manifolds of
constant sectional curvature are Kenmotsu. Moreover, nearly Kenmotsu
manifold can be never realized as hypersurface of nearly K\"{a}hler manifold.

\begin{theorem}
Normal nearly Kenmotsu manifold is Kenmotsu.

\begin{proof}
We know that $d\eta =0$, hence nearly Kenmotsu manifold is normal iff $N=0$.
But in view of the Proposition \ref{2-form}, in the case $N=0$, we have 
\begin{equation*}
d\Phi =2\eta \wedge \Phi ,
\end{equation*}%
which means that $M$ is almost Kenmotsu. Now we use the fact that normal
almost Kenmotsu manifold is Kenmotsu.
\end{proof}
\end{theorem}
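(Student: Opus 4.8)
The plan is to reduce the normality hypothesis to the vanishing of the Nijenhuis torsion alone, then read off from Proposition \ref{2-form} that $M$ is almost Kenmotsu, and finally invoke the structure theory of almost Kenmotsu manifolds. Recall that for an almost contact metric manifold the normality tensor is $N^{(1)}(X,Y)=[\phi,\phi](X,Y)+2\,d\eta(X,Y)\xi$, and the structure is normal precisely when $N^{(1)}=0$. Since the curvature analysis culminating in Theorem \ref{Integ} already forces $H=0$, hence $d\eta=0$, on any nearly Kenmotsu manifold, normality collapses to the single condition $[\phi,\phi]=0$.

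With $[\phi,\phi]=0$ in hand, I would substitute into the second displayed identity of Proposition \ref{2-form}. Two terms must be controlled: the Nijenhuis term $\tfrac{1}{4}g([\phi,\phi](X,Y),\phi Z)$ drops out by normality, and the term $\eta(Z)(\nabla_\xi\Phi)(X,Y)$ vanishes because $\nabla_\xi\phi=\phi H=0$ by Proposition \ref{killing} once $H=0$, so that $(\nabla_\xi\Phi)=0$ as well. What survives is exactly $d\Phi=2\,\eta\wedge\Phi$. Together with $d\eta=0$, this is the defining pair of conditions for an almost Kenmotsu manifold, so $M$ is almost Kenmotsu. It then remains to quote the known result that a normal almost Kenmotsu manifold is Kenmotsu, which completes the argument.

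I expect the main subtlety to be the bookkeeping in the second step, namely confirming simultaneously that the torsion contribution disappears and that $(\nabla_\xi\Phi)$ is zero, rather than any genuine difficulty, since the heavy lifting (the vanishing of $H$ and the fundamental-form identities) is already carried out earlier in the paper. An alternative, more self-contained route would bypass the almost Kenmotsu characterization entirely and attempt to derive the pointwise Kenmotsu identity \eqref{5} directly from \eqref{4} together with $[\phi,\phi]=0$; there the obstacle would be the symmetrization and antisymmetrization of $(\nabla_X\phi)Y$ in $X$ and $Y$ and its combination with the Nijenhuis expression, which is considerably more computational and is where I would expect to spend the bulk of the effort.
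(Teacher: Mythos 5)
Your proposal is correct and follows essentially the same route as the paper: normality plus the already-established $d\eta=0$ reduces to $[\phi,\phi]=0$, the second identity of Proposition \ref{2-form} then yields $d\Phi=2\eta\wedge\Phi$ (your explicit check that $(\nabla_\xi\Phi)=0$ via $\nabla_\xi\phi=\phi H=0$ is a detail the paper leaves implicit), and the conclusion follows from the known fact that a normal almost Kenmotsu manifold is Kenmotsu.
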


The almost Hermitian manifold $(N,J,G)$ is called nearly K\"{a}hler if $(%
\bar{\nabla}_{X}J)Y+$ $(\bar{\nabla}_{Y}J)X=0$, $\bar{\nabla}$ denotes the
Levi-Civita connection of $G$, (see for more details \cite{gray1}, \cite%
{gray2}). Simplest facts about nearly K\"{a}hler manifolds: If $N$ is
Hermitian or locally flat then is K\"{a}hler, any four-dimensional nearly K%
\"{a}hler manifold is necessarily K\"{a}hler.

Y. Tashiro \cite{tas} proved that Riemannian hypersurface $(M,g)\subset
(N,G) $, inherits almost contact metric structure $(\phi ,\xi ,\eta ,g)$,
where $(\phi ,\xi ,\eta )$ are defined by 
\begin{equation}
JX=\phi X+\eta (X)N,~~~JN=-\xi ,  \label{w2}
\end{equation}%
where $N$ is normal vector field.

\begin{theorem}
There is no nearly Kenmotsu hypersurface, in nearly K\"ahler manifold.
\end{theorem}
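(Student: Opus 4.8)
The plan is to argue by contradiction: assume that a Riemannian hypersurface $(M^{2n+1},g)$ of a nearly K\"ahler manifold $(N^{2n+2},J,G)$ carries the Tashiro structure $(\phi,\xi,\eta,g)$ of \eqref{w2} and that this structure happens to be nearly Kenmotsu. First I would fix a unit normal $N$ and record the Gauss and Weingarten formulas $\bar\nabla_X Y=\nabla_X Y+g(AX,Y)N$ and $\bar\nabla_X N=-AX$, where $A$ is the shape operator and $\nabla$ is the connection induced on $M$. Differentiating the Tashiro relations $JY=\phi Y+\eta(Y)N$ and $JN=-\xi$ by $\bar\nabla$ and then sorting the outcome into components tangent and normal to $M$ should yield
\begin{equation*}
(\bar\nabla_X J)Y=(\nabla_X\phi)Y-\eta(Y)AX+g(AX,Y)\xi+\bigl(g(AX,\phi Y)+(\nabla_X\eta)(Y)\bigr)N.
\end{equation*}
This component computation is the computational core of the proof; the only thing to watch is the careful bookkeeping of the tangent versus normal summands and the signs inherited from \eqref{w2}.

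Next I would impose the nearly K\"ahler condition $(\bar\nabla_X J)Y+(\bar\nabla_Y J)X=0$ and read off its tangential part, which, using that $A$ is self-adjoint, becomes
\begin{equation*}
(\nabla_X\phi)Y+(\nabla_Y\phi)X-\eta(Y)AX-\eta(X)AY+2g(AX,Y)\xi=0.
\end{equation*}
Substituting the defining identity \eqref{4} of a nearly Kenmotsu structure for the symmetric part $(\nabla_X\phi)Y+(\nabla_Y\phi)X$ converts this into
\begin{equation*}
-\eta(Y)\phi X-\eta(X)\phi Y-\eta(Y)AX-\eta(X)AY+2g(AX,Y)\xi=0.
\end{equation*}
Notably, the normal part of the nearly K\"ahler equation, which would involve $\nabla\xi$, is not needed at all.

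Finally I would pin down $A$ by specialization. Setting $X=Y=\xi$ and using $\phi\xi=0$ gives $A\xi=\alpha\xi$ with $\alpha:=g(A\xi,\xi)$, whence $g(AX,\xi)=\alpha\eta(X)$; setting then $Y=\xi$ collapses the last identity to
\begin{equation*}
AX=-\phi X+\alpha\eta(X)\xi.
\end{equation*}
This is the sought contradiction, since the shape operator is self-adjoint while $-\phi$ is skew-adjoint (by $g(\phi X,Y)+g(\phi Y,X)=0$) and does not vanish. Explicitly, for any $0\neq X\in\ker\eta$ one computes $A\phi X=-\phi^2 X=X$, so self-adjointness forces $g(AX,\phi X)=g(X,A\phi X)=g(X,X)$, whereas the displayed formula gives $g(AX,\phi X)=-g(\phi X,\phi X)=-g(X,X)$; hence $g(X,X)=0$, which is absurd. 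I therefore expect no serious obstacle beyond the sign-careful derivation of the component formula in the first step, the decisive observation being that the tangential part of the nearly K\"ahler equation alone already determines $A$ completely.
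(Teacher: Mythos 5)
Your proof is correct, and it follows the paper's route up to the decisive identity: both you and the paper take the Tashiro structure \eqref{w2}, write the Gauss--Weingarten formulas, extract the tangential part of $(\bar\nabla_XJ)Y+(\bar\nabla_YJ)X=0$ to get $(\nabla_X\phi)Y+(\nabla_Y\phi)X-\eta(Y)AX-\eta(X)AY+2g(AX,Y)\xi=0$, and substitute \eqref{4}. You diverge in the endgame. The paper contracts the resulting identity with $\xi$ to obtain $h(X,Y)=h(\xi,\xi)\eta(X)\eta(Y)$, then feeds this into the second Gauss--Weingarten consequence $g(Y,\nabla_X\xi)+g(X,\nabla_Y\xi)=-h(Y,\phi X)-h(X,\phi Y)$ to conclude that $\xi$ would be Killing, contradicting Proposition \ref{killing}. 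You instead set $Y=\xi$, solve completely for the shape operator, $AX=-\phi X+\alpha\eta(X)\xi$, and derive the contradiction purely algebraically from the self-adjointness of $A$ versus the skew-adjointness of $\phi$ on $\ker\eta$. Your version is more self-contained: it uses neither the normal/$\nabla\xi$ relation nor Proposition \ref{killing}, and it makes explicit that the tangential equation alone is already inconsistent (indeed, combining the paper's two stated consequences forces $\phi=0$). What the paper's variant buys instead is the geometric reading of the obstruction --- the hypersurface structure would make $\xi$ Killing, which \eqref{8} forbids --- at the cost of invoking an earlier proposition.
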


\begin{proof}
Let $A=-\bar{\nabla}N$, be Weingarten map, and $\nabla $ denote Levi-Civita
connection on $M$. From $(\bar{\nabla}_{X}J)Y+(\bar{\nabla}_{Y}J)X=0$,
Gauss-Weingarten equations follow 
\begin{gather}
(\nabla _{X}\phi )Y+(\nabla _{Y}\phi )X-\eta (Y)AX-\eta (X)AY+2h(X,Y)\xi =0,
\\
g(Y,\nabla _{X}\xi )+g(X,\nabla _{Y}\xi )=-h(Y,\phi X)-h(X,\phi Y),
\end{gather}%
if $M$ is nearly Kenmotsu, then 
\begin{gather}
-\eta (Y)\phi X-\eta (X)\phi Y=\eta (Y)AX+\eta (X)AY-2h(X,Y)\xi , \\
h(X,Y)=h(\xi ,\xi )\eta (X)\eta (Y).
\end{gather}%
In consequence $g(\nabla _{X}\xi ,Y)+g(\nabla _{Y}\xi ,X)=0$, which
contradicts with Proposition \ref{killing}.
\end{proof}

\begin{theorem}
\label{gray} \cite{gray2} Let $M$ be a nearly Kaehler manifold with dim $%
M\leq 4.$Then $M$ is Kaehlerian.
\end{theorem}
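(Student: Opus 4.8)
The plan is to reduce the Kähler condition $\bar\nabla J = 0$ to the vanishing of a single alternating tensor, which low dimension forces for free. First I would introduce the $(0,3)$-tensor $\psi(X,Y,Z) = G((\bar\nabla_X J)Y, Z)$ and prove it is totally skew-symmetric. Skew-symmetry in the first two slots is precisely the nearly Kähler hypothesis $(\bar\nabla_X J)Y = -(\bar\nabla_Y J)X$. Skew-symmetry in the last two slots comes from the fact that $\bar\nabla_X J$ is a $G$-skew endomorphism: differentiating the identity $G(JY,Z) + G(Y,JZ) = 0$ and using $\bar\nabla G = 0$ leaves $G((\bar\nabla_X J)Y, Z) + G(Y, (\bar\nabla_X J)Z) = 0$, i.e. $\psi(X,Y,Z) = -\psi(X,Z,Y)$. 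Since the transpositions $(1\,2)$ and $(2\,3)$ generate $S_3$, these two antisymmetries make $\psi$ a genuine $3$-form.

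Next I would record how $J$ acts on $\psi$. Differentiating $J^2 = -\mathrm{Id}$ gives $(\bar\nabla_X J)J + J(\bar\nabla_X J) = 0$, hence $(\bar\nabla_X J)(JY) = -J(\bar\nabla_X J)Y$; combined with $J$ being $G$-skew this yields $\psi(X, JY, Z) = \psi(X, Y, JZ)$. Feeding this into the total skew-symmetry of $\psi$ produces the symmetric transfer rule $\psi(JX,Y,Z) = \psi(X,JY,Z) = \psi(X,Y,JZ)$, so the operator $J$ may be moved freely among the three arguments.

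Then comes the dimension count. If $\dim M = 2$, every alternating $3$-form on a $2$-dimensional space is identically zero, so $\psi = 0$ immediately. If $\dim M = 4$, I would evaluate $\psi$ on a $J$-adapted orthonormal basis $\{e_1, Je_1, e_3, Je_3\}$. A $3$-form in dimension $4$ has only the four components obtained by deleting one basis vector, and each such triple necessarily contains a pair related by $J$ (for instance $(e_1, Je_1, e_3)$ or $(e_1, e_3, Je_3)$). For any such triple the transfer rule moves $J$ onto the partner slot and then antisymmetry identifies the component with its own negative, forcing it to vanish; hence $\psi \equiv 0$. In either case $\psi = 0$ means $(\bar\nabla_X J)Y = 0$ for all $X,Y$, that is $\bar\nabla J = 0$, which is exactly the Kähler condition.

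The main obstacle is bookkeeping rather than depth: one must check that the two elementary antisymmetries really combine into full skew-symmetry and that the $J$-transfer rule is consistent with it, and then confirm that in dimension $4$ every independent basis component is genuinely pinned by a $J$-pair so that antisymmetry annihilates it. Once total skew-symmetry and the transfer rule are established, the conclusion is forced purely by the smallness of the dimension, with no curvature input required.
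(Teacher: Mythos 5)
Your argument is correct and complete. The paper itself gives no proof of this statement—it is quoted from Gray's 1976 paper—and what you have written is essentially Gray's classical argument: total skew-symmetry of $\psi(X,Y,Z)=G((\bar\nabla_XJ)Y,Z)$ follows from the nearly K\"ahler identity together with $\bar\nabla G=0$, the $J$-transfer rule makes $\psi$ a form of type $(3,0)+(0,3)$, and in real dimension $\le 4$ (complex dimension $\le 2$) every such form vanishes, which your basis computation verifies by elementary bookkeeping. No gaps; the only cosmetic remark is that the case $\dim M=2$ could be dispatched even faster by noting any $3$-form vanishes there, exactly as you do.
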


Using Theorem \ref{gray} we can give following corollary.

\begin{corollary}
There is not exist proper nearly Kenmotsu manifolds for dimension $3$ and $5.
$
\end{corollary}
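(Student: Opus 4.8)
The plan is to combine the local structure theorem for nearly Kenmotsu manifolds (Theorem \ref{Integ}) with the dimensional rigidity of low-dimensional nearly K\"ahler manifolds (Theorem \ref{gray}). By Theorem \ref{Integ}(c), any nearly Kenmotsu manifold $M^{2n+1}$ is locally isometric to a warped product $I\times_f\tilde M$, where $I\subset\mathbb{R}$ is an open interval, $\tilde M^{2n}$ is the maximal integral submanifold of the distribution $D=\ker\eta$ carrying the induced nearly K\"ahler structure $J$ from Theorem \ref{Integ}(b), and $f=e^{t}$.

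First I would specialize the dimension. If $\dim M=3$ then $n=1$ and the fiber satisfies $\dim\tilde M=2$; if $\dim M=5$ then $n=2$ and $\dim\tilde M=4$. In either case $\dim\tilde M\le 4$, so Theorem \ref{gray} forces $\tilde M$ to be K\"ahler, i.e.\ $\tilde\nabla J=0$ rather than merely nearly K\"ahler.

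Next I would show that a warped product $I\times_{e^{t}}\tilde M$ with a genuinely K\"ahler fiber is in fact Kenmotsu. This is the converse direction of Kenmotsu's local classification: computing $(\nabla_X\phi)Y$ on the warped product via the Koszul and O'Neill formulas, the intrinsic fiber term vanishes because $\tilde\nabla J=0$, while the contributions of the warping $f=e^{t}$ and the Reeb direction $\xi=\partial_t$ assemble precisely into the Kenmotsu identity $(\nabla_X\phi)Y=g(\phi X,Y)\xi-\eta(Y)\phi X$ of \eqref{5}. Hence $M$ is locally Kenmotsu, so it cannot be proper, which proves that no proper nearly Kenmotsu manifolds exist in dimensions $3$ and $5$.

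The main obstacle is this final step, namely verifying that upgrading the fiber from nearly K\"ahler to K\"ahler upgrades $M$ from nearly Kenmotsu to Kenmotsu. One can sidestep the explicit computation by invoking Kenmotsu's structure theorem directly, since a warped product of a real line and a K\"ahler manifold with warping $e^{t}$ is Kenmotsu; but if a self-contained argument is preferred, the careful bookkeeping of $\nabla\phi$ across the base, fiber, and mixed directions is where the real work lies.
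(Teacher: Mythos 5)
Your proposal is correct and follows essentially the same route the paper intends: the corollary is stated immediately after Gray's theorem precisely so that Theorem \ref{Integ}(c) (local warped-product structure with nearly K\"ahler fiber of dimension $2n\le 4$) combined with Theorem \ref{gray} forces the fiber to be K\"ahler, whence Kenmotsu's characterization of $I\times_{e^{t}}\tilde M$ with K\"ahler fiber shows $M$ is Kenmotsu and hence not proper. The paper omits the final verification you describe, so your added care about transporting the K\"ahler condition on the fiber into the identity \eqref{5} on $M$ is a welcome completion rather than a deviation.
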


\end{document}